\DeclareMathOperator{\ex}{ex}
\DeclareMathOperator{\rainbow}{rainbow-}
\newtheorem{theorem}{Theorem}[section]
\theoremstyle{definition}
\theoremstyle{remark}
\newcommand{\rH}{{\rainbow} H}
\newcommand{\rC}{{\rainbow} C}
\title{Generalized Rainbow Tur\'an  Numbers of Odd Cycles}
\author{
  J\'ozsef Balogh \footnote{Department of Mathematics, University of Illinois Urbana-Champaign, Urbana, Illinois 61801, USA, and Moscow Institute of Physics and Technology, Russian Federation. E-mail: \texttt{jobal@illinois.edu}. Research supported by NSF RTG Grant DMS-1937241, NSF Grant DMS-1764123, Arnold O. Beckman Research Award (UIUC Campus Research Board RB 18132), the Langan Scholar Fund (UIUC), and the Simons Fellowship.}
 \and Michelle Delcourt \footnote {Department of Mathematics,  Ryerson  University,   Toronto,  Ontario  M5B  2K3, Canada. E-mail: \texttt{mdelcourt@ryerson.ca}.  Research supported by NSERC under Discovery Grant No.2019-04269 and an AMS-Simons Travel Grant.}
 \and Emily Heath \footnote {Department of Mathematics, Iowa State University, Ames, IA 50011, USA. Email: \texttt{eheath@iastate.edu}. Research supported by NSF RTG Grant DMS-1937241.}
 \and Lina Li \footnote {Department of Combinatorics and Optimization, University of Waterloo, Waterloo, Ontario N2L 3G1, Canada. Email: \texttt{lina.li@uwaterloo.ca}.}
  }
\begin{document}
\maketitle

\begin{abstract}
Given graphs $F$ and $H$, the \emph{generalized rainbow Tur\'an number} $\ex(n,F,\rH)$ is the maximum number of copies of $F$ in an $n$-vertex graph with a proper edge-coloring that contains no rainbow copy of $H$. B.~Janzer determined the order of magnitude of $\ex(n,C_s,\rC_t)$ for all $s\geq 4$ and $t\geq 3$, and a recent result of O.~Janzer implied that $\ex(n,C_3,\rC_{2k})=O(n^{1+1/k})$. 
We prove the corresponding upper bound for the remaining cases, showing that  $\ex(n,C_3,\rC_{2k+1})=O(n^{1+1/k})$. This matches the known lower bound for $k$ even and is conjectured to be tight for $k$ odd.
\end{abstract}

\section{Introduction}

The \emph{Tur\'an number} of a graph $H$ is the maximum number of edges in an $H$-free graph on $n$ vertices, denoted $\ex(n,H)$. 
This has been generalized in many different ways. For example, the \emph{rainbow Tur\'an number} $\ex^*(n,H)$, introduced in \cite{KMSV}, is the maximum number of edges in a graph on $n$ vertices which can be properly edge-colored with no rainbow copy of $H$. 
Another natural variation is the \emph{generalized Tur\'an number} $\ex(n,F,H)$,  which is the maximum number of copies of a graph $F$ in an $n$-vertex graph that contains no copy of $H$, and was first studied systematically by Alon and Shikhelman~\cite{AS}.
Both of these problems have been extensively studied, see for example~\cite{J2,KMSV} and \cite{AS,B,BG,E,GGMV,GS,GL,Z}.

Gerbner, M\'esz\'aros, Methuku and Palmer~\cite{GMMP}
considered the following generalized problem which unites the two concepts above.
Given two graphs $F$ and $H$, the \emph{generalized rainbow Tur\'an number}, denoted by $\ex(n,F,\rH)$, is the maximum number of copies of $F$ in an $n$-vertex graph which can be properly edge-colored to avoid a rainbow copy of $H$. Note that trivially we have $\ex(n,F,\rH)\geq \ex(n,F,H)$. The question for $F = H$ has been studied for paths, trees, cycles and cliques, see~\cite{GMMP,GJ,J1}.

Recently, B.~Janzer~\cite{J1} determined the order of magnitude of $\ex(n,C_s,\rC_t)$ for all cases except for $s=3$. 
In the case $s=3$, he gave the following bounds.
\begin{theorem}[Janzer~\cite{J1}]\label{thm Janzer}
If $k \geq 2$ is odd then $\ex(n, C_3,\rC_{2k}) = \Omega(n^{1+1/k})$, and if $k$ is even then $\ex(n, C_3,\rC_{2k+1}) = \Omega(n^{1+1/k})$. Furthermore, for every integer $k \geq 2$, we have
\[\ex(n, C_3,\rC_{2k}) = O(\ex^*(n, C_{2k})),\]
and
\[\ex(n, C_3,\rC_{2k}) \geq \ex(n, C_3, C_{2k}) = \Omega(\ex(n, \{C_4, C_6,\dots, C_{2k}\})),\]
\[\ex(n, C_3,\rC_{2k+1}) \geq \ex(n, C_3, C_{2k+1}) = \Omega(\ex(n, \{C_4, C_6,\dots, C_{2k}\})).\]
\end{theorem}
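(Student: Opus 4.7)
The theorem has three components: (i) the trivial chains $\ex(n,C_3,\rC_t)\ge \ex(n,C_3,C_t)$; (ii) the equalities $\ex(n,C_3,C_t)=\Omega(\ex(n,\{C_4,\dots,C_{2k}\}))$ for $t\in\{2k,2k+1\}$, together with the explicit $\Omega(n^{1+1/k})$ bounds; and (iii) the upper bound $\ex(n,C_3,\rC_{2k})=O(\ex^*(n,C_{2k}))$.

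Part (i) is immediate: take an $n$-vertex $C_t$-free graph maximizing triangles and properly edge-color it by Vizing's theorem; since it has no $C_t$ at all it is vacuously rainbow-$C_t$-free, so the extremal construction for the non-rainbow problem is valid for the rainbow problem.

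For part (ii) I would exhibit a concrete $C_t$-free graph on $n$ vertices carrying $\Omega(n^{1+1/k})$ triangles. For the even case $t=2k$ the natural source is the polarity graph of a generalized $(k+1)$-gon, which is $\{C_4,\dots,C_{2k}\}$-free (hence $C_{2k}$-free) and carries $\Theta(n^{1+1/k})$ triangles; by Feit--Higman these polarity graphs exist precisely for the parities of $k$ appearing in the statement, which is why the first sentence restricts to those parities. For the odd case $t=2k+1$, I would use a bipartite blow-up in which a carefully designed twin/matching gadget is attached to a bipartite $\{C_4,\dots,C_{2k}\}$-free extremal graph $G$: the gadget creates $\Theta(|E(G)|)$ triangles, and a projection/parity argument — reducing a hypothetical $C_{2k+1}$ in the blow-up to a short even closed walk in the bipartite $G$ — together with the $\{C_4,\dots,C_{2k}\}$-freeness of $G$ rules out any $C_{2k+1}$ in the blow-up. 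Both constructions yield simultaneously the chain $\ex(n,C_3,C_t)=\Omega(\ex(n,\{C_4,\dots,C_{2k}\}))$ and, after invoking the known lower bounds $\ex(n,\{C_4,\dots,C_{2k}\})=\Omega(n^{1+1/k})$ from incidence geometries, the explicit $\Omega(n^{1+1/k})$ bound.

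Part (iii) is the technical heart. The plan is to bound the number of triangles $t(G)$ in any $n$-vertex properly edge-colored rainbow-$C_{2k}$-free graph $G$ by $O(|E(G)|)$, after which $|E(G)|\le\ex^*(n,C_{2k})$ by definition finishes. Equivalently, one must show that the average number of triangles through an edge is $O(1)$. Examining the triangle fan at an edge $uv$ through $w_1,\dots,w_t$: the proper coloring forces the colors $c(uw_i)$ to be pairwise distinct and similarly for the $c(vw_i)$, and the absence of a rainbow $C_{2k}$ forces color coincidences along every $(2k)$-cycle obtained by extending the 4-cycle $u w_i v w_j$ by a $(2k-4)$-path through the rest of $G$. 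Converting these local constraints into a global linear-in-edges bound on $t(G)$ is the main obstacle, and I would expect the proof to proceed by an iterative Janzer-style path-extension argument closely paralleling the proof of $\ex^*(n,C_{2k})=O(n^{1+1/k})$, tracking triangles rather than edges at each stage.
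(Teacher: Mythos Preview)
This theorem is not proved in the present paper: it is quoted from B.~Janzer~\cite{J1} to set the stage for Theorem~\ref{thm longer cycles}, and the paper supplies no argument for any of its parts. Consequently there is no ``paper's own proof'' to compare your proposal against.

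That said, a brief remark on your sketch. Your part (i) is correct and your part (iii) identifies the right reduction: once one knows that the number of triangles in a properly colored rainbow-$C_{2k}$-free graph is $O(|E(G)|)$, the bound $|E(G)|\le \ex^*(n,C_{2k})$ is immediate from the definition, and together these give $\ex(n,C_3,\rC_{2k})=O(\ex^*(n,C_{2k}))$. The paper actually carries out exactly this $O(|E(G)|)$ step (for the odd-cycle case) in the first paragraph of the proof of Theorem~\ref{thm longer cycles}, via the rainbow-path Tur\'an bound $\ex^*(n,P_{2k-1})=O(kn)$ applied inside random half-neighborhoods; the same argument works verbatim for the even case, so the ``Janzer-style path-extension'' machinery you anticipate is unnecessary here. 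Your part (ii) is more speculative: polarity graphs of generalized polygons only give $\Omega(n^{1+1/k})$ for a few sporadic values of $k$, so they cannot directly account for the full parity-restricted statement in the theorem; the actual constructions in~\cite{J1} are somewhat different, and you would need to consult that paper to verify the details.
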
  
Very recently, O.~Janzer~\cite{J2} settled a well-known  conjecture of Keevash, Mubayi, Sudakov and Verstra\"{e}te~\cite{KMSV}, proving that 
\begin{equation}\label{eqn even cycle}
\ex^*(n, C_{2k})=\Theta(n^{1 +1/k}).
\end{equation}
Together with Theorem~\ref{thm Janzer}, this yields $\ex(n, C_3,\rC_{2k}) = O(n^{1 +1/k})$, which is tight for $k$ odd.

In this paper, we study the remaining open case for cycles, proving an upper bound on $\ex(n,C_3,\rC_{2k+1})$ which matches the lower bound given by B.~Janzer~\cite{J1} for $k$ even and which is expected to be sharp for $k$ odd as well.

\begin{theorem}\label{thm longer cycles} 
For $k\geq 2$, we have \[\ex(n,C_3,\rC_{2k+1})=O\left(n^{1+1/k}\right).\]
\end{theorem}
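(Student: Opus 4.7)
The plan is to argue by contradiction: suppose $G$ is a properly edge-colored $n$-vertex graph with no rainbow $C_{2k+1}$ and more than $Cn^{1+1/k}$ triangles, for a sufficiently large constant $C = C(k)$, and derive a rainbow $C_{2k+1}$. Note first that every triangle in a properly edge-colored graph is automatically rainbow, since its three pairwise-adjacent edges must carry pairwise distinct colors; in particular, we already have $\Omega(n^{1+1/k})$ rainbow triangles.

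The main leverage comes from the following color-diversity principle. If $u, v$ share at least $6k - 3$ common neighbors and $P$ is \emph{any} rainbow $u$-$v$ path of length $2k - 1$ in $G$, then some common neighbor $z$ closes $P$ into a rainbow $C_{2k+1}$ via the edges $uz$ and $zv$. This works because at most $2k - 2$ common neighbors lie on $P$, and---by properness, which forces the edges $\{uz : z \in N(u) \cap N(v)\}$ to have pairwise distinct colors---at most $|C(P)| = 2k - 1$ common neighbors $z$ satisfy $c(uz) \in C(P)$, and similarly at most $2k - 1$ satisfy $c(vz) \in C(P)$. These exclusions total at most $6k - 4$, leaving at least one valid $z$ and producing the forbidden rainbow $C_{2k+1}$.

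Thus it suffices to exhibit a pair $(u, v)$ with $|N(u) \cap N(v)| \geq 6k - 3$ together with a rainbow $u$-$v$ path of length $2k - 1$. I would first clean $G$ by iteratively deleting every edge lying in fewer than $6k$ triangles; using the bound $|E(G)| = O(n^{1+1/k})$ (available from rainbow Tur\'an results for odd cycles or from a direct edge-count argument), this cleaning destroys only $O(n^{1+1/k})$ triangles, so a subgraph $G' \subseteq G$ survives with $\Omega(n^{1+1/k})$ triangles in which every edge has codegree at least $6k$. The codegree condition is then automatic for every edge of $G'$, and the problem reduces to finding a rainbow $(2k-1)$-path between the endpoints of some edge $uv \in E(G')$.

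The main obstacle is this final step. My plan is to adapt the path-counting machinery of O.~Janzer~\cite{J2}: once $|E(G')|$ exceeds the rainbow Tur\'an threshold for $C_{2k}$, a rainbow $C_{2k}$ must appear in $G'$, and deleting any edge of this rainbow cycle produces a rainbow path of length $2k - 1$ between two adjacent vertices of $G'$---exactly the configuration required above. The technical subtlety is ensuring $|E(G')|$ is genuinely large enough to trigger the rainbow $C_{2k}$: the cleaning step controls triangle counts but not directly edge counts, so one may need a supersaturation version of O.~Janzer's bound, or a more refined cleaning procedure that preserves edge density alongside triangle density. Handling the walk-to-path conversion while respecting the rainbow constraint on the target length $2k-1$ is where the bulk of the technical work will concentrate.
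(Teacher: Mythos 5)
Your closing lemma is correct and pleasant: if $uv$ is an edge whose endpoints have at least $6k-3$ common neighbors and $P$ is a rainbow $u$--$v$ path of length $2k-1$, then properness at $u$, at $v$, and at the chosen common neighbor $z$ lets you exclude at most $(2k-2)+(2k-1)+(2k-1)=6k-4$ bad choices of $z$ and close $P$ into a rainbow $C_{2k+1}$. However, the argument has a fatal gap one step earlier: the bound $|E(G)|=O(n^{1+1/k})$, which you invoke to make the cleaning step affordable, is simply not ``available from rainbow Tur\'an results for odd cycles.'' A properly colored complete bipartite graph has $\Theta(n^2)$ edges and no odd cycle at all, so $\ex^*(n,C_{2k+1})=\Theta(n^2)$; the edge bound only becomes true after reducing to the subgraph of edges lying in triangles, and even then \emph{proving} it is essentially the entire content of the theorem. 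Your plan is therefore circular: the cleaning (deleting edges in fewer than $6k$ triangles while destroying only $O(kn^{1+1/k})$ triangles) needs the edge bound, the edge bound is what you are trying to establish, and without the cleaning you have no codegree condition with which to apply the closing lemma. You partially flag the downstream symptom of this (``the cleaning step controls triangle counts but not directly edge counts''), but the disease is upstream.

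The paper circumvents the need for any codegree hypothesis with a different idea, which is the one missing from your proposal. After reducing to the case where every edge lies in a triangle, it fixes for each edge $uv$ a designated apex $w(uv)$, then randomly partitions \emph{both} the vertex set into $(A,B)$ and the color set into $(X,Y)$, and keeps only those edges $uv\subseteq B$ with $w(uv)\in A$, $f(uv)\in X$, and $f(uw),f(vw)\in Y$. This retains a constant fraction ($1/64$) of the edges, and in the resulting subgraph \emph{every} rainbow $C_{2k}$ extends to a rainbow $C_{2k+1}$ by inserting the designated apex of any one of its edges --- no color clash is possible because the cycle uses only colors from $X$ and the two new edges only colors from $Y$. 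Applying O.~Janzer's bound $\ex^*(n,C_{2k})=O(n^{1+1/k})$ to this subgraph then yields the edge bound directly, with no cleaning and only the hypothesis that each edge is in one triangle. If you want to salvage your route, you would need to replace the appeal to a pre-existing edge bound by some such filtering device; as written, the proposal does not contain a proof.
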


In Section~\ref{sec C5}, we give a self-contained proof of the upper bound on $\ex(n,C_3,\rC_5)$ which gives an explicit constant, although it is likely not best possible. In Section~\ref{sec longer cycles}, we prove Theorem~\ref{thm longer cycles} by applying~\eqref{eqn even cycle} to a subgraph in which every rainbow copy of $C_{2k}$ extends to a rainbow copy of $C_{2k+1}$ in the original graph. Throughout the paper, we use $P_k$ to denote the path with $k$ edges.

\section{No rainbow $C_5$}\label{sec C5}

\begin{theorem}\label{thm C5} 
We have $\ex(n,C_3,\rC_5)\leq 32n^{3/2}.$
\end{theorem}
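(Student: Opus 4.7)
The plan is to prove the contrapositive: assume $G$ is an $n$-vertex properly edge-colored graph with more than $32 n^{3/2}$ triangles and derive a rainbow $C_5$.

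First, since each triangle contributes to three vertices in the sum $\sum_{u \in V(G)} f(u) = 3 \tau(G)$ (where $f(u)$ is the number of triangles through $u$), averaging yields a vertex $v$ with $f(v) > 96 \sqrt{n}$. Writing $N = N(v)$ and $H = G[N]$, the triangles through $v$ correspond bijectively to edges of $H$, so $|E(H)| > 96\sqrt{n}$. Define $\phi \colon N \to \textrm{colors}$ by $\phi(a) := c(va)$; then $\phi$ is injective, and every edge $ab \in E(H)$ satisfies $c(ab) \notin \{\phi(a), \phi(b)\}$ by properness of $c$.

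Second, every path $a_0 a_1 a_2 a_3$ of length $3$ in $H$ completes to a $5$-cycle $v\, a_0\, a_1\, a_2\, a_3\, v$ in $G$ whose edge colors are $\phi(a_0), c(a_0 a_1), c(a_1 a_2), c(a_2 a_3), \phi(a_3)$. The five adjacent-pair color distinctions are automatic from properness together with the definition of $\phi$, so rainbowness reduces to the five non-adjacent constraints
\[
\phi(a_0) \notin \{c(a_1 a_2),\, c(a_2 a_3)\}, \quad \phi(a_3) \notin \{c(a_0 a_1),\, c(a_1 a_2)\}, \quad c(a_0 a_1) \neq c(a_2 a_3).
\]
The strategy is to count $P_3$'s in $H$ from below, perhaps after pruning $H$ iteratively to secure a lower bound on its minimum degree, and to count, for each of the five constraints, the $P_3$'s that violate it from above: a violation like $\phi(a_0) = c(a_1 a_2)$ pins the color on the middle edge, restricting the choices for $a_1 a_2$ to a fixed color class (which is a matching), and the other violations are analogous. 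With $|E(H)| > 96\sqrt n$, the total count of $P_3$'s should dominate the sum of the five bad counts, producing a $P_3$ whose induced $C_5$ is rainbow.

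The main obstacle is the possibility that $H$ has essentially no $P_3$'s at all, for example if $H$ is a matching on roughly $192\sqrt n$ vertices. In that regime, a rainbow $C_5$ through $v$ must use at least one vertex outside $N$: for each matching edge $a \bar a$ in $E(H)$, we would look in $G \setminus \{v\}$ for a length-$3$ path $a\, x\, y\, \bar a$ satisfying the analogous color constraints. Handling this degenerate case separately—exploiting that $G$ must be reasonably dense near the matched pairs when $\tau(G)$ is large—and then stitching the two regimes together while tracking the explicit constant $32$ is the technical heart of the argument.
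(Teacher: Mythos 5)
Your proposal has a genuine gap, and you have in fact pointed at it yourself: the entire argument funnels through finding a path $a_0a_1a_2a_3$ with all four vertices in $N(v)$, i.e.\ a $P_3$ in $H=G[N(v)]$, and nothing guarantees such a path exists. A graph $H$ with $96\sqrt{n}$ edges can be a matching, which contains no path of length $2$, let alone of length $3$; pruning to raise the minimum degree does not rescue this, since $|V(H)|=d(v)$ can be close to $n$ and the average degree of $H$ close to $0$, so after pruning you may still be left with a matching. Worse, this is not a removable degenerate case but essentially the main one: a $C_5$ through $v$ uses only \emph{two} vertices of $N(v)$, with the other two lying anywhere in $G$, so restricting the search to cycles contained in $\{v\}\cup N(v)$ discards almost all candidate cycles from the start. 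Your sketch for the matching regime (``look in $G\setminus\{v\}$ for a length-$3$ path $a\,x\,y\,\bar a$ with the analogous color constraints'') is a restatement of the problem still to be solved, not an argument, and the stitching of the two regimes and the constant $32$ are likewise deferred. The local color bookkeeping (the five non-adjacent constraints) is correct, but as written the proposal is a plan whose hardest step is left open.

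For contrast, the paper never needs to \emph{produce} a long path inside a neighborhood; it splits the work so that the extra vertices of the $C_5$ come from outside $N(v)$. First it bounds the number of triangles by $4|E(G)|$: for a half-sized subset $S\subset N(v)$, the edges of $G[S]$ whose colors avoid the star at $v$ must be rainbow-$P_3$-free (a rainbow $P_3$ there would close through $v$ into a rainbow $C_5$), and $\ex^*(m,P_3)=\tfrac32 m$ plus a double count forces $|E(G[N(v)])|\le 6d(v)$ --- so here the \emph{absence} of such paths is what gets used. Second, assuming $|E(G)|\ge 8n^{3/2}$ and that every edge lies in a triangle, it passes to a subgraph of minimum degree $4\sqrt n$, counts cherries (paths of length $2$ with endpoints in $N_{G'}(v)$ but center \emph{anywhere} in $G'$) by convexity to find two neighbors $u,x$ of $v$ joined by at least seven cherries with the right adjacency structure, and uses the triangle containing the edge $uv$ to supply the fifth vertex $w$; the cycle $v\,w\,u\,(\text{cherry center})\,x\,v$ is then made rainbow by pigeonhole on the seven cherries and the three $F$-neighbors. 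Supplying an argument of comparable strength for your ``degenerate'' case is exactly the missing content.
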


\begin{proof}
Let $G$ be an $n$-vertex graph with a proper edge-coloring $c$ containing no rainbow copy of $C_5$. First, we will show that $G$ contains at most $4|E(G)|$ triangles. 

Fix a vertex $v\in V(G)$ and let $d$ denote the degree of $v$. We will count in two ways the pairs $(S,e)$ where $S\subset N(v)$ contains $\lceil d/2\rceil$  neighbors of $v$ and $e\in E(G[S])$ satisfies $c(e)\neq c(vu)$ for every $u\in S$. There are $\binom{d}{\lceil d/2\rceil}$ ways to choose the set $S$. Throw out any edge in $G[S]$ whose color appears on an edge incident with $v$ and $S$. 
Let $E'$ denote the set of remaining edges in $G[S]$. Now $E'$ must be rainbow $P_3$-free, otherwise we can find a rainbow copy of $C_5$ in $G$. 
Therefore, since Johnston, Palmer, and Sarkar~\cite{JPS} showed that $\ex^*(n,P_3)=\frac{3}{2}n$, 
we have $|E'|\leq \frac{3}{2}\left\lceil \frac{d}{2}\right\rceil$.  Thus, the number of triangles containing $v$ which are formed in this way, that is, the number of pairs of such a set $S$ and an edge from $E'$ of a different color, is at most $\frac{3}{2}\left\lceil \frac{d}{2}\right\rceil\binom{d}{\lceil d/2\rceil}$.

On the other hand, we could instead first choose an edge $e$ in the neighborhood of $v$ to form our triangle, which can be done in $|E(G[N(v)])|$ ways, and then select an additional $\lceil d/2\rceil-2$ vertices from $N(v)$ to form the rest of $S$. However, we do not want the color of $e$ to appear on an edge incident to $v$. Since the edge coloring is proper, this only requires us to throw out at most one vertex from $N(v)$ since at most one edge incident to $v$ can have the same color as $e$. Thus, we can pick the remaining vertices of $S$ in at least $\binom{d-3}{\lceil d/2\rceil-2}$ ways. Therefore, we have 
\[|E(G[N(v)])|\cdot \binom{d-3}{\lceil d/2\rceil-2} \leq \frac{3}{2}\left\lceil \frac{d}{2}\right\rceil\cdot\binom{d}{\lceil d/2\rceil}.\]
Thus, $|E(G[N(v)])|\leq 6d$, so the number of triangles in $G$ is at most \[\frac{1}{3}\sum_{v\in V(G)}|E(G[N(v)])|\leq \frac{1}{3}\sum_{v\in V(G)}6d(v)=4|E(G)|.\]

We may assume that every edge of $G$ is in a triangle, otherwise we could delete an edge without decreasing the number of triangles. Now assume towards a contradiction that $|E(G)|\geq 8n^{3/2}$.

One-by-one, delete vertices of degree less than $4\sqrt{n}$ in $G$.  Note that not all vertices are deleted, since otherwise $|E(G)|<4n^{3/2}$. Denote by $G'$ the remaining induced subgraph with minimum degree $\delta(G')\geq 4\sqrt{n}$, and let $n'=|V(G')|$. Fix an arbitrary vertex $v\in V(G')$.

A \textit{cherry} is a path of length 2. We will form an auxiliary graph $F$ with vertex set $V(F)=N_{G'}(v)$ which contains an edge $uw$ if and only if there are at least seven cherries of the form $uxw$ in $G'$. We will show that $F$ must contain a vertex of degree at least 3 and use this vertex to find a rainbow copy of $C_5$ in $G$.

Let $S\subseteq N_{G'}(v)$ be a set of $\lfloor\sqrt{n}\rfloor$ vertices. We will count the cherries in $G'$ with endpoints in $S$. 
Since each vertex $x$ in $V(G')$ is the center vertex in exactly $\binom{d_S(x)}{2}$ cherries\footnote{$d_S(x)=|N_G(x)\cap S|$.} of this form, we can count the desired cherries as follows:
\begin{align*}
    \sum_{x\in V(G')} \binom{d_S(x)}{2} & \geq n'\binom{\frac{1}{n'}\sum d_S(x)}{2}
     = n'\binom{\frac{1}{n'}\sum_{s\in S} d_{G'}(s)}{2}
     \geq n'\binom{\frac{1}{n'}\cdot\delta(G') |S|}{2}\\
    &  \geq \frac{(\delta(G')|S|)^2}{4n'}
    \geq \frac{16n |S|^2}{4n'}
     > 7\binom{|S|}{2},
\end{align*}
where we use convexity and the fact that $\delta(G')\geq 4\sqrt{n}$.
Thus, there must be some pair of vertices in $S$ which are the endpoints of at least seven cherries in $G'$, and hence, these vertices are adjacent in $F$.  Since $S$ was an arbitrary set of $\lfloor\sqrt{n}\rfloor$ vertices in $N_{G'}(v)=V(F)$, we have shown that  $\alpha(F)\leq \sqrt{n}$, where $\alpha(F)$ denotes the independence number of $F$. This gives $\Delta(F)\geq |F|/\alpha(F)-1\geq 4\sqrt{n}/\sqrt{n}-1\geq 3$, so there is a vertex $u$ in $F$ of degree at least 3. Let $x,y,z$ be neighbors of $u$ in $F$. By assumption, the edge $uv$ is in at least one triangle in $G$, so there is a vertex $w\in V(G)$, possibly in $\{x,y,z\}$, which forms a triangle with $uv$. 

\begin{figure}
\centering
\begin{tikzpicture}[scale=.8]
		\filldraw[very thick, color=black,fill=white] (-6,0) ellipse (0.8 and 2);
		\draw[very thick, black,->] (-4.2,0.4) -- (-3.2,0.4);
		\filldraw[very thick, color=black,fill=white] (0.6,0) ellipse (2.35 and 2);
		\filldraw[very thick, color=black,fill=white] (0.15,0) ellipse (0.7 and 1.8);
		\node[above] at (-6,2.1) {$F$};
		\node[above] at (0.6,2.1) {$G'$};
		\node[above] at (1.65,0.55) {$N_{G'}(v)$};
        \node at (-6.3,-1.4) {$u$};
        \node at (-6.4,-.4) {$z$};
        \node at (-6.4,.4) {$y$};
        \node at (-6.3,1.4) {$x$};
        \node at (-1.5,0) {$v$};
        \node at (-2.2,-2) {$w$};
        \node at (.3,-1.4) {$u$};
        \node at (.4,-.2) {$z$};
        \node at (.4,.5) {$y$};
        \node at (.3,1.4) {$x$};
		\draw[very thick, black, dashed] (-6,-1.2) -- (-6, -.4);
		\draw[very thick, black ,dashed] (-6,-1.2) to [out=15,in=-15] (-6, .4);
		\draw[very thick, black, dashed] (-6,-1.2) to [out=25,in=-25]  (-6, 1.2);
		\draw[very thick, blue] (-1.4,-.4) -- (-1.8,-1.8); 
		\draw[very thick, black] (-1.4,-.4) -- (0, -1.2);  
		\draw[very thick, black] (-1.4,-.4) -- (0, -.4); 
		\draw[very thick, black] (-1.4,-.4) -- (0, .4); 
		\draw[very thick, orange] (-1.4,-.4) -- (0, 1.2); 
		\draw[very thick, red] (-1.8,-1.8) -- (0,-1.2);  
		\draw[very thick, green!70!blue] (0,-1.2) -- (1.9,-.5); 
		\draw[very thick, magenta!60!blue] (0,1.2) -- (1.9,-.5); 
        \node at (-1.9,-1.15) {1}; 
        \node at (-1.1,-1.95) {2}; 
        \node at (1.3,.-1.1) {5}; 
        \node at (1.6,0.2) {4}; 
        \node at (-1,0.6) {3};	
		\draw[fill=black] (-6,-1.2) circle (3pt); 
		\draw[fill=black] (-6,-.4) circle (3pt); 
		\draw[fill=black] (-6,.4) circle (3pt); 
		\draw[fill=black] (-6,1.2) circle (3pt); 
        \draw[fill=black] (-1.4,-.4) circle (3pt); 
        \draw[fill=black] (-1.8,-1.8) circle (3pt); 
        \draw[fill=black] (0,-1.2) circle (3pt); 
        \draw[fill=black] (0,-.4) circle (3pt); 
        \draw[fill=black] (0,.4) circle (3pt); 
        \draw[fill=black] (0,1.2) circle (3pt); 
        \draw[fill=black] (1.9,-.5) circle (3pt); 
    \end{tikzpicture}
\caption{If there is a vertex of degree at least 3 in $F$, then $G$ contains a rainbow copy of $C_5$. Edges in $F$ are dashed while edges in $G$ are solid.}
\label{figureC5}
\end{figure}
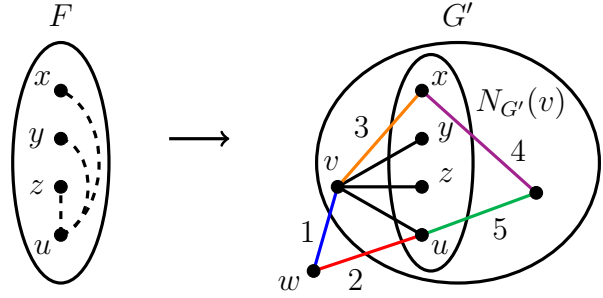

 Let $c(vw)=1$ and $c(uw)=2$. Then since $c$ is a proper coloring, at least one of $vx$, $vy$, and $vz$ is colored with a new color, say $c(vx)=3$. Since $ux$ is an edge in $F$, there are at least seven cherries in $G'$ with endpoints $u$ and $x$, and hence, at least one with new colors 4 and 5 which avoids $v$ and $w$. Thus, there is a rainbow copy of $C_5$ in $G$, and we reach a contradiction. Therefore, $G$ must contain at most $8n^{3/2}$ edges, and hence, at most $32n^{3/2}$ triangles, as desired.
\end{proof}

\section{No rainbow $C_{2k+1}$}\label{sec longer cycles}

\begin{proof}[Proof of Theorem~\ref{thm longer cycles}]
Let $G$ be an $n$-vertex graph with a proper edge-coloring $f:E(G)\rightarrow C$ containing no rainbow copy of $C_{2k+1}$. We may assume each edge in $G$ is in at least one triangle. 

As in Section~\ref{sec C5}, we begin by giving a bound on the number of triangles in $G$ in terms of the number of edges in $G$. Fix a vertex $v\in V(G)$ and let $d$ denote the degree of $v$. Pick a set $S\subset N(v)$ containing $\lceil d/2\rceil$ neighbors of $v$, and throw out any edge in $G[S]$ which is colored using some color which appears on an edge from $v$ to $S$.

Then $G[S]$ cannot contain a rainbow copy of $P_{2k-1}$.
A result of Ergemlidze, Gy\H{o}ri, and Methuku~\cite{EGM} showed that $\ex^*(n, P_{k+1})<\left(\frac{9k}{7} + 2\right)n$.
Therefore, we obtain
\[|E(G[S])|\leq\ex^*\left(\left\lceil\frac{d}{2}\right\rceil,P_{2k-1}\right)\leq \frac{18k-4}{7}\cdot \left\lceil\frac{d}{2}\right\rceil.\]
By counting the triangles containing $v$ and two adjacent vertices in $S$ in two ways, we get \[|E(G[N(v)])|\cdot \binom{d-3}{\lceil d/2\rceil-2} \leq \frac{18k-4}{7}\cdot\left\lceil\frac{d}{2}\right\rceil\cdot\binom{d}{\lceil d/2\rceil}.\]
Thus, $|E(G[N(v)])|\leq \frac{72k-16}{7}d$, and the number of triangles in $G$  is at most \[\frac{1}{3}\sum_{v\in V(G)}|E(G[N(v)])|\leq \frac{72k-16}{21}\sum_{v\in V(G)}d(v)=\frac{144k-32}{21}|E(G)|.\] 
We will show that $|E(G)|=O(n^{1+1/k})$. 

Assume towards a contradiction that $G$ has more edges. For each edge $uv\in E(G)$, arbitrarily fix a vertex $w=w(uv)$ such that $u,v$, and $w$ form a triangle in $G$. We will find a subgraph of $G$ in which any rainbow copy of $C_{2k}$ can be extended to a rainbow copy of $C_{2k+1}$ in $G$.
To this end, randomly select a partition of $V(G)$ into parts $A$ of size $\lfloor\frac{n}{2}\rfloor$ and $B$ of size $\lceil\frac{n}{2}\rceil$. Similarly, take a random partition of $C$ into parts $X$ and $Y$ of sizes $\lfloor\frac{|C|}{2}\rfloor$ and $\lceil\frac{|C|}{2}\rceil$, respectively. 

Let $F$ be the subgraph with vertex set $B$ which contains an edge $uv\in E(G[B])$ if and only if the vertex $w=w(uv)$ is in $A$ and $f(uv)\in X$ while $f(uw), f(vw)\in Y$. 
Then the expected number of edges in $F$ is least $|E(G)|/64$, so we can fix partitions $(A,B)$ and $(X,Y)$ such that the corresponding graph $F$ has at least this many edges.

Note that $F$ inherits the proper edge-coloring $f$ from $G$, so we can apply~\eqref{eqn even cycle} to this subgraph. 
Since $\ex^*(n,C_{2k})=O(n^{1+1/k})$, there must be a rainbow copy of $C_{2k}$ in $F$.
This cycle contains only vertices in $B$, so we can replace an arbitrary edge $uv$ in the cycle by a pair of edges $uw$ and $wv$ with $w\in A$ to create a copy of $C_{2k+1}$ in $G$. Furthermore, the cycle in $F$ is colored only with colors from $X$, while the new edges have colors from $Y$, so we have found a rainbow copy of $C_{2k+1}$.  But this is a contradiction, since $G$ contains no rainbow copies of $C_{2k+1}$. Thus, $|E(G)|=O(n^{1+1/k})$, which implies that the number of triangles in $G$ is $O(n^{1+1/k})$, as desired.
\end{proof}

\begin{section}*{Acknowledgments}
We are grateful to Oliver Janzer for his suggestion which simplified our original proof of Theorem~\ref{thm longer cycles}. We would also like to thank the anonymous referees for their careful reading and useful suggestions.
\end{section}

\bibliography{turan}

\begin{thebibliography}{10}
\expandafter\ifx\csname url\endcsname\relax
  \def\url#1{\texttt{#1}}\fi
\expandafter\ifx\csname urlprefix\endcsname\relax\def\urlprefix{URL }\fi
\expandafter\ifx\csname href\endcsname\relax
  \def\href#1#2{#2} \def\path#1{#1}\fi

\bibitem{KMSV}
P.~Keevash, D.~Mubayi, B.~Sudakov, J.~Verstra{\"e}te, Rainbow {T}ur{\'a}n
  problems, Comb. Probab. Comput. 16 (2007) 109--126.
\newblock \href {https://doi.org/10.1017/S0963548306007760}
  {\path{doi:10.1017/S0963548306007760}}.

\bibitem{AS}
N.~Alon, C.~Shikhelman, Many {$T$} copies in {$H$}-free graphs, J. Comb.
  Theory, Ser. B 121 (2016) 146--172.
\newblock \href {https://doi.org/10.1016/j.jctb.2016.03.004}
  {\path{doi:10.1016/j.jctb.2016.03.004}}.

\bibitem{J2}
O.~Janzer, Rainbow {T}ur\'an number of even cycles, repeated patterns and
  blow-ups of cycles (2020).
\newblock \href {http://arxiv.org/abs/2006.01062} {\path{arXiv:2006.01062}}.

\bibitem{B}
B.~Bollob\'as, On complete subgraphs of different orders, Math. Proc. Camb.
  Philos. Soc. 79~(1) (1976) 19--24.
\newblock \href {https://doi.org/10.1017/S0305004100052063}
  {\path{doi:10.1017/S0305004100052063}}.

\bibitem{BG}
B.~Bollob\'as, E.~Gy\H{o}ri, Pentagons vs. triangles, Discrete Math. 308 (2008)
  4332--4336.
\newblock \href {https://doi.org/10.1016/j.disc.2007.08.016}
  {\path{doi:10.1016/j.disc.2007.08.016}}.

\bibitem{E}
P.~Erd\H{o}s, On the number of complete subgraphs contained in certain graphs,
  Publ. Math. Inst. Hungar. Acad. Sci. 7 (1962) 459--474.

\bibitem{GGMV}
D.~Gerbner, E.~Gy\H{o}ri, A.~Methuku, M.~Vizer, Generalized {T}ur{\'a}n
  problems for even cycles, J. Comb. Theory, Ser. B 145 (2020) 169--213.
\newblock \href {https://doi.org/10.1016/J.JCTB.2020.05.005}
  {\path{doi:10.1016/J.JCTB.2020.05.005}}.

\bibitem{GS}
L.~Gishboliner, A.~Shapira, A generalized {T}ur{\'a}n problem and its
  applications, Int. Math. Res. Not. 2020~(11) (2020) 3417--3452.
\newblock \href {https://doi.org/10.1093/imrn/rny108}
  {\path{doi:10.1093/imrn/rny108}}.

\bibitem{GL}
E.~Gy\H{o}ri, H.~Li, The maximum number of triangles in {$C_{2k+1}$}-free
  graphs, Comb. Probab. Comput. 21 (2012) 187--191.
\newblock \href {https://doi.org/10.1017/S0963548311000629}
  {\path{doi:10.1017/S0963548311000629}}.

\bibitem{Z}
A.~A. Zykov, On some properties of linear complexes, Mat. Sb. (N.S.) 24 (1949)
  163--188.

\bibitem{GMMP}
D.~Gerbner, T.~M\'esz\'aros, A.~Methuku, C.~Palmer, Generalized rainbow
  {T}ur\'an problems (2019).
\newblock \href {http://arxiv.org/abs/1911.06642} {\path{arXiv:1911.06642}}.

\bibitem{GJ}
W.~T. Gowers, B.~Janzer, Generalizations of the {R}uzsa-{S}zemer\'edi and
  rainbow {T}ur\'an problems for cliques, Comb. Probab. Comput. 30~(4) (2021)
  591--608.
\newblock \href {https://doi.org/10.1017/S0963548320000589}
  {\path{doi:10.1017/S0963548320000589}}.

\bibitem{J1}
B.~Janzer, The generalised rainbow {T}ur\'an problem for cycles (2020).
\newblock \href {http://arxiv.org/abs/2005.08073} {\path{arXiv:2005.08073}}.

\bibitem{JPS}
D.~Johnston, C.~Palmer, A.~Sarkar, Rainbow {T}ur{\'a}n problems for paths and
  forests of stars, Electron. J. Comb. 24~(1) (2017) P1.34.
\newblock \href {https://doi.org/10.37236/6430} {\path{doi:10.37236/6430}}.

\bibitem{EGM}
B.~Ergemlidze, E.~Gy\H{o}ri, A.~Methuku, On the rainbow {T}ur\'an number of
  paths, Electron. J. Comb. 26~(1) (2019) P1.17.
\newblock \href {https://doi.org/10.37236/7889} {\path{doi:10.37236/7889}}.

\end{thebibliography}
\end{document}